\newtheorem{thm}{Theorem}
\newtheorem{cor}[thm]{Corollary}
\newtheorem{pro}[thm]{Proposition}
\def\P{\mathcal{P}}
\def\Bbb{\mathbb}
\def\proof{{\noindent \bf Proof. \hspace{.01in}}}
\newcommand{\qed}{\hspace{.1in} \vrule height 7pt width 5pt depth 0pt \medskip}
\begin{document}


\title{\Large Comparison of the upper bounds for the extreme points of
the polytopes of line-stochastic tensors}
\author{
Fuzhen Zhang${}^{\,\rm a}$\thanks{zhang@nova.edu}
,\;
Xiao-Dong Zhang${}^{\,\rm b}$\thanks{Corresponding author. xiaodong@sjtu.edu.cn}
\\
\footnotesize{${}^{\,\rm a}$Department of Mathematics, Nova Southeastern University, Fort Lauderdale, USA}\\
\footnotesize{${}^{\,\rm b}$School of mathematical sciences, Shanghai Jiao Tong University, Shanghai, China}}

\date{}
\maketitle
In memory of Professor Ky Fan.
\bigskip
 \hrule
\bigskip

\noindent {\bf Abstract}
We call a real multi-dimensional array a {\em tensor} for short.
 In enumerating vertices of the
polytopes of stochastic tensors, different approaches
have been used: {(1)}  Combinatorial method via Latin squares;
{(2)} Analytic (topological)  approach by using hyperplanes;  {(3)} Computational geometry (polytope theory) approach; and
(4) Optimization (linear programming) approach. As all these approaches are
 worthy of consideration and investigation in the enumeration problem, various bounds have been obtained.
This note is to compare the existing upper bounds arose from different approaches.

\medskip
\noindent {\em AMS Classification:}
{Primary 52B11; 15B51.}
\medskip

\noindent {\em Keywords:} {Birkhoff polytope, Birkhoff-von Neumann theorem,  extreme point,     polytope, stochastic tensor, tensor,
  vertex.}

\bigskip
\hrule

\date{}
\maketitle

\bigskip

Polytopes play an important role in mathematics and applications, most notably in discrete geometry and linear programming.
Here by a {\em polytope} we mean a bounded convex set
contained in a Euclidean space $\Bbb R^d$  that is generated by  finitely many points. In other words,
if  $\P \subset \Bbb R^d$ is a {polytope}, then $\P$ is the convex hull of a finite set of points
in $\Bbb R^d$ (see, e.g., \cite[p.\,8]{Bar02} or \cite[p.\,4]{Zie95}).
The Krein-Milman Theorem (see, e.g., \cite[p.\,121]{Bar02}) ensures that every polytope
is the convex hull of its vertices (extreme points). It is a fundamental and central  question in the polytope theory to determine the number and structures of the vertices (or faces of higher dimensions) for a given polytope, and this is
  an extremely difficult problem in general
(see, e.g., \cite{Bro83} or \cite[p.\,254]{Zie95}).

 A doubly stochastic matrix is a nonnegative square matrix in which each row sum as well as each column sum is equal to one.
The  classical  Birkhoff polytope $\mathcal{B}_n$ consists
of $n\times n$ doubly stochastic matrices, and the celebrated  Birkhoff-von Neumann Theorem states that $\mathcal{B}_n$ is the convex hull of all $n\times n$  permutation matrices (see, e.g., \cite[p.\,159]{ZFZbook11}). In other words,
as a polytope in $\Bbb R^{n^2}\hspace{-.05in},$    $\;\mathcal{B}_n$ has  $n!$ vertices (see, e.g., \cite[p.\,20]{Zie95}).  Carath\'{e}odory's theorem ensures that
every  $n\times n$ doubly stochastic matrix can be written as a convex combination of at most $n^2-2n+2$ permutation matrices. 
Geometrical and combinatorial properties of  the Birkhoff polytope  have been extensively studied; see, e.g.,  \cite{Ahmed08, BrCs75, BrCs75laa,  BrCs91, ChenQi19, CLN14,  Paf15, LL14};
see also \cite[pp.\,47--52]{MOA11} for a brief account.

We are concerned with the polytopes of stochastic tensors.
We simply call  a real multidimensional array (i.e., matrix of higher order or hypermatrix) a {\em tensor}.
See, e.g.,
\cite{WeiBook16, Qi2018Book, QLbook17},  for the theory and applications of tensors.
By a {\em stochastic tensor} we mean a tensor of certain stochastic properties (such as
line-stochasticity, defined below,  and plane-stochasticity, etc.).

Let ${n_1, n_2, \dots, n_d}$ be positive integers. As usual, we write
$$A=(a_{i_1i_2\dots i_d}), \quad  {\rm where}\;
a_{i_1i_2\dots i_d}\in \Bbb R,\;
i_k=1, 2,  \dots, n_k, \; k=1, 2,  \dots, d,$$  for an $n_1\times n_2\times  \cdots \times n_d$  tensor   $A$ of order $d$ (the number of indices).
A tensor  $A=(a_{i_1i_2\dots i_d})$ may be regarded as an element in
$\Bbb R^{n_1n_2 \cdots n_d}$.
The tensors of order 1 (i.e., $d=1$) are the vectors in $\Bbb R^{n_1}$, while the 2nd order tensors are the usual  matrices. A regular Rubik's Cube may be regarded as a $3\times 3\times 3$
 {tensor}.
 An $m\times n\times 3$ tensor has three (frontal) layers of $m\times n$ matrices, and 
 it may be identified
 with an $m\times (3n)$ rectangular matrix.
If $n_1= n_2=\dots= n_d=n$, we say that
$A$ has  order $d$ and dimension $n$  or  $A$ is an $\overbrace{n\times   \cdots \times n}^d$ tensor.
(Note: the terms order and dimension may be defined differently in other texts.)

 For a nonnegative tensor $A=(a_{i_1i_2\dots i_d})$
of order $d$ and dimension $n$, we say that $A$ is {\em line-stochastic} \cite{FiSw85}
  if the sum of  the entries on each line is   1, that is,
$$\sum_{i=1}^n a_{\cdots i\cdots }=1.$$

An $n\times n$ doubly stochastic matrix,  in particular, a permutation matrix,
is a line-stochastic tensor of  order 2 and dimension $n$.
The   Birkhoff polytope $\mathcal{B}_n$
of $n\times n$ doubly stochastic matrices is generated (via convex combinations) by
exactly the permutation matrices. However, for higher order tensors, the situation is
very different and it can be   much more complicated.  Let

$$ Q=\frac12 \left [\begin{array}{ccc}
0 & 1 & 1\\
1 & 1 & 0\\
1 & 0 & 1
\end{array} \right |  \begin{array}{ccc}
1 & 1 & 0 \\
0 &1 & 1\\
1 & 0 & 1
\end{array}  \left |\begin{array}{ccc}
1 & 0 & 1 \\
1 & 0 & 1\\
0& 2 & 0
\end{array} \right ].$$
Then $Q$ is  a line-stochastic tensor of  order 3 and dimension $3$. One may verify that
$Q$ is not a convex combination of (0,1)-tensors  (i.e., permutation tensors).

We
 denote by $\mathcal{L}_n$ the polytope of  the $n\times n\times n$ (triply) line-stochastic tensors.
(One may define stochastic tensors of higher orders, such as triply plane-stochastic tensors.)
The Krein-Milman Theorem asserts that  every polytope
is the convex hull of its extreme points. Then what are those  for
$\mathcal{L}_n$? The above example $Q$  shows  that $\mathcal{L}_3$ has
extreme points other than the (0,1)-tensors.

It is an interesting problem and uneasy task (see, e.g., \cite{LZZ17}) to determine the extreme points
  for a polytope of stochastic tensors; see \cite{KLX16} for more information
  on this topic. For other aspects such as the permanents of tensors,
 see \cite{WZ17Per} and the references therein.
 For general tensors and their properties, the reader is referred to the books \cite{ WeiBook16, Qi2018Book, QLbook17}.

   In enumerating vertices of the
polytope $\mathcal{L}_n$, different approaches
have been undertaken: {(1)}.  Combinatorial method via Latin squares (see, e.g.,
\cite[Theorem~0.1]{Mayathesis03} or \cite[Theorem 2.0.10]{Mayathesis04});
{(2)}. Analytic  approach by using hyperplane and induction \cite{ChangPZ16};  {(3)}. Computational geometry approach \cite{LZZ17}; and
(4). Optimization (operation research) approach \cite{ZZOptim20}. As all these approaches to the enumeration are
 worthy of investigation, various bounds have been obtained.
We compare the existing bounds arose from different approaches.


Let $f_0(\mathcal{L}_n)$ be the number of vertices of $\mathcal{L}_n$.
(Note:  $f_i(\mathcal{P})$ usually denotes the number of faces of dimension $i$ of polytope
$\mathcal{P}$.)  We have seen
the estimation of $f_0(\mathcal{L}_n)$
 in various ways.
By a combinatorial method using Latin squares,
Ahmed,  De Loera, and  Hemmecke (see \cite[Theorem 2.0.10]{Mayathesis04} or \cite[Theorem~0.1]{Mayathesis03})
 gave an explicit lower bound $\frac{(n!)^{2n}}{n^{n^2}}$. A sharper lower bound is immediate
  by noticing that the number of Latin squares of order $n$, denoted by $L(n)$, is equal to the number of
 $n\times n\times n$ line-stochastic (0,1)-tensors (see \cite{JurRys68} or \cite[pp.\,159--161]{Lint92}). Observe  that every (0-1)-stochastic tensor is
 an extreme point. So
 $$\frac{(n!)^{2n}}{n^{n^2}}\leq L(n)\leq f_0(\mathcal{L}_n).$$
This brilliant idea is seen in Jurkat and Ryser \cite{JurRys68}.
For the case of $n=3$, one may identity (via one-to-one mapping)  a $3\times 3$ Latin square $S$ with a $3\times 3\times 3$ tensor cube $T$:
 If $(i, j)$-entry of the Latin square is $k$,
  $1\leq i, j, k \leq 3$, then let $t_{ijk}=1$ and all other $t_{pqr}=0$.  For example,
$$S=\left [\begin{array}{ccc}
1 & 2 & 3 \\
2 & 3 & 1\\
3 & 1 & 2
\end{array} \right ] \mapsto T=\left [\begin{array}{ccc}
1 & 0 & 0\\
0 & 0 & 1\\
0 & 1 & 0
\end{array} \right |  \begin{array}{ccc}
0 & 1 & 0 \\
1 & 0 & 0\\
0 & 0 & 1
\end{array}  \left |\begin{array}{ccc}
0 & 0 & 1 \\
0 & 1 & 0\\
1 & 0 & 0
\end{array} \right ].$$

Other approaches and upper bounds are recapped below.

By an
analytic and topological approach using hyperplanes and induction,
 Chang, Paksoy, and Zhang \cite{ChangPZ16} obtained an upper bound.

\begin{thm}
 [Theorem~4.1 \cite{ChangPZ16}]\label{CPZ16}
 Let $f_0(\mathcal{L}_n)$ be the number of vertices of the polytope $\mathcal{L}_n$ of
 the $n\times n\times n$ line-stochastic tensors.
Then
\begin{equation}\label{CPZ16}
f_0(\mathcal{L}_n)\leq \frac{1}{n^3}  {p(n)\choose n^3-1},
\end{equation}
\mbox{where $p(n)=n^3+6n^2-6n+2$}.
\end{thm}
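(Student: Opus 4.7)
My plan is to view $\mathcal{L}_n$ as a convex polytope inside a linear subspace of $\mathbb{R}^{n^3}$ and to estimate its vertex count through a hyperplane-arrangement/incidence count combined with an inductive step.

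The first task is to fix the ambient geometry. The $3n^2$ line-sum equations defining $\mathcal{L}_n$ have rank $3n^2 - 3n + 1$ (the familiar ``one relation per direction'' count), so $\mathcal{L}_n$ lies in an affine subspace $\mathcal{A}\subset \mathbb{R}^{n^3}$ of dimension $(n-1)^3 = n^3 - 3n^2 + 3n - 1$. Inside $\mathcal{A}$ the polytope is the intersection with the non-negative orthant, i.e.\ the intersection with the correct half-spaces of the $n^3$ coordinate hyperplanes $H_{ijk}\colon a_{ijk}=0$. A vertex of $\mathcal{L}_n$ is thus a point of $\mathcal{A}$ at which at least $(n-1)^3$ of the $H_{ijk}$ meet and whose remaining coordinates are non-negative.

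The approach suggested by the phrase \emph{hyperplanes and induction} is to pass to a purely inequality-defined description of $\mathcal{L}_n$ in full $\mathbb{R}^{n^3}$: replace each of the $3n^2 - 3n + 1$ independent line-sum equalities $\ell(a)=1$ by the pair $\ell(a)\le 1$ and $-\ell(a)\le -1$. This presents $\mathcal{L}_n$ as the solution set of exactly $n^3 + 2(3n^2-3n+1) = p(n)$ linear inequalities. A vertex is determined by an active subset of these inequalities whose normal vectors span $\mathbb{R}^{n^3}$, so a first crude estimate is $f_0(\mathcal{L}_n) \le \binom{p(n)}{n^3}$. I would then refine this to the stated form $\tfrac{1}{n^3}\binom{p(n)}{n^3-1}$ by combining (i)~an incidence/averaging argument showing that each genuine vertex is realized as a full-rank active $n^3$-subset with a controlled multiplicity (the factor $n^3$), and (ii)~an induction that adjoins the hyperplanes one at a time and propagates the estimate through a Pascal-style recursion.

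The main obstacle I foresee is pinning down the exact multiplicity factor $n^3$ and justifying the binomial index $n^3-1$ rather than $n^3$. Two features complicate the naive counting: the pair $\ell(a)\le 1$ and $-\ell(a)\le -1$ shares the same boundary hyperplane, so the $p(n)$ inequalities define only $n^3 + (3n^2 - 3n + 1)$ distinct hyperplanes; and both inequalities of every such pair are automatically active at every point of $\mathcal{L}_n$, so the effective freedom at a vertex is much smaller than the raw count $\binom{p(n)}{n^3}$ suggests. Arranging the induction so that these redundancies collapse precisely into the factor $\tfrac{1}{n^3}$ and the index shift $n^3-1$---without losing the clean closed form---is, I expect, the delicate heart of the proof.
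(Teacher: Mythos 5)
This theorem is quoted in the paper from \cite{ChangPZ16} without proof, so there is no in-paper argument to compare against; I can only judge your sketch on whether it would establish the stated bound. Your setup is correct and does account for the provenance of $p(n)$: the $n^3$ nonnegativity constraints together with the two inequalities replacing each of the $3n^2-3n+1$ independent line-sum equalities give exactly $p(n)=n^3+2(3n^2-3n+1)$ half-spaces, the dimension count $(n-1)^3$ is right, and the basic-feasible-solution argument legitimately yields the crude bound $f_0(\mathcal{L}_n)\le\binom{p(n)}{n^3}$.

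The gap is everything after that. Since $\frac{1}{n^3}\binom{p(n)}{n^3-1}=\frac{1}{6n^2-6n+3}\binom{p(n)}{n^3}$, the stated bound improves on your crude count by a factor of order $n^2$, and that improvement is the entire content of the theorem beyond the trivial count; you do not prove it, you only name two hoped-for mechanisms (an ``incidence/averaging argument'' giving multiplicity $n^3$, and a ``Pascal-style recursion'') and explicitly defer the ``delicate heart.'' Neither mechanism is substantiated. An averaging argument in which each vertex is hit by at least $n^3$ admissible subsets would produce $\frac{1}{n^3}\binom{p(n)}{n^3}$, not $\frac{1}{n^3}\binom{p(n)}{n^3-1}$, so even the shape of the claimed refinement does not match the target; and no lower bound of $n^3$ on the number of full-rank active $n^3$-subsets at a vertex is established --- indeed, because the $2(3n^2-3n+1)$ equality-derived rows have rank only $3n^2-3n+1$ and are active at every point of $\mathcal{L}_n$, the combinatorics of which $n^3$-subsets are both active and linearly independent is precisely the part that needs a real argument. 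The source obtains the closed form by an inductive hyperplane-slicing argument (the ``analytic approach using hyperplanes and induction'' the present paper alludes to), in which the independent line-sum hyperplanes are intersected one at a time and the number of vertices created at each cut is controlled; your static active-set count does not reproduce this, so as written the proposal proves only $f_0(\mathcal{L}_n)\le\binom{p(n)}{n^3}$ and not the theorem.
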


By a computational geometry approach using the  McMullen Upper Bound Theorem (UBT) \cite{McM70} (see also, e.g., \cite[p.\,90]{Bro83})  for  polytopes.
Li, Zhang and Zhang
\cite{LZZ17}
showed an upper bound.

\begin{thm}[Theorem~2 \cite{LZZ17}]\label{LZZ17}
Let $f_0(\mathcal{L}_n)$ be the number of vertices of the polytope $\mathcal{L}_n$ of
 the $n\times n\times n$ line-stochastic tensors. Then
\begin{equation}\label{LZZ17}
 f_0(\mathcal{L}_n)\leq \left (\hspace{-.08in} \begin{array}{c}
n^3- \lfloor \frac{(n-1)^3+1}{2}\rfloor\\
 3n^2-3n+1
 \end{array} \hspace{-.08in} \right )+\left (\hspace{-.08in}  \begin{array}{c}
n^3- \lfloor \frac{(n-1)^3+2}{2}\rfloor\\
 3n^2-3n+1
 \end{array} \hspace{-.08in} \right  ).
\end{equation}
\end{thm}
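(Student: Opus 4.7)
The plan is to realize $\mathcal{L}_n$ as a bounded polyhedron in $\mathbb{R}^{n^3}$ cut out by the $3n^2$ line-sum equalities together with the $n^3$ nonnegativity inequalities $a_{i_1i_2i_3}\geq 0$, and then to invoke McMullen's Upper Bound Theorem in its dual form: for any $d$-polytope with at most $v$ facets, the vertex count is bounded by $f_{d-1}(C(v,d))$. Two inputs are needed, namely the affine dimension $d=\dim\mathcal{L}_n$ and an upper bound $v$ on its number of facets.

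First I would compute $\dim\mathcal{L}_n$ by analyzing the rank of the $3n^2\times n^3$ coefficient matrix of the line-sum equations. Bookkeeping the dependencies among these equations---any two of the three direction-sums over a fixed coordinate slice produce the same slice total---shows that the rank equals $3n^2-3n+1$, so
$$\dim\mathcal{L}_n = n^3-(3n^2-3n+1)=(n-1)^3.$$
Equivalently, the $(n-1)^3$ entries $a_{i_1i_2i_3}$ with $1\le i_1,i_2,i_3\le n-1$ form a free system of coordinates on the affine hull of $\mathcal{L}_n$, the remaining entries being recovered from the line-sum relations.

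Second, since $\mathcal{L}_n$ lies in the nonnegative orthant, each of its facets is supported by some hyperplane $a_{i_1i_2i_3}=0$, so $\mathcal{L}_n$ has at most $v=n^3$ facets. Applying the dual UBT with $d=(n-1)^3$ and $v=n^3$, together with the standard closed-form expression
$$f_{d-1}(C(v,d))=\binom{v-\lfloor(d+1)/2\rfloor}{v-d}+\binom{v-\lfloor(d+2)/2\rfloor}{v-d}$$
and the identity $v-d = n^3-(n-1)^3=3n^2-3n+1$, produces the claimed inequality verbatim. The right-hand side is monotone in $v$, so using the crude bound $v\leq n^3$ in place of the exact facet count still yields a valid upper bound on $f_0(\mathcal{L}_n)$.

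The main obstacle I anticipate is the rank-dimension step: although $\dim\mathcal{L}_n=(n-1)^3$ is folklore and parallels the $(n-1)^2$-dimensional Birkhoff polytope, a clean bookkeeping of all dependencies among the $3n^2$ line-sum equations---or equivalently producing the explicit basis of $(n-1)^3$ free entries---needs some care. A secondary technical point is justifying McMullen's theorem for a polytope that need not be simple; here one appeals either to the standard perturbation argument (generic perturbations of the facet hyperplanes only increase the vertex count while preserving both the facet count and the dimension) or directly to the polar form of the UBT.
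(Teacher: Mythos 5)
Your proposal is correct and follows essentially the same route as the cited source: the paper states only that Theorem~\ref{LZZ17} was obtained in \cite{LZZ17} ``by a computational geometry approach using the McMullen Upper Bound Theorem,'' and your argument---dimension $(n-1)^3$ from the rank $3n^2-3n+1$ of the line-sum system, at most $n^3$ facets from the nonnegativity constraints, and the dual form of the UBT with $v=n^3$, $d=(n-1)^3$, $v-d=3n^2-3n+1$---is exactly that derivation and reproduces the stated bound verbatim.
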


It is shown in \cite[Proposition~3]{LZZ17}
that  the upper bound in (\ref{LZZ17})  is better (shaper) than the  one in (\ref{CPZ16}). However, the lower bound derived  by computational geometry approach (lower bound theorem)
 is no better in general.

By an approach of
 optimization and linear programming, Zhang and Zhang presented another upper bound for
 $f_0(\mathcal{L}_n)$.

\begin{thm}[Theorem~3.4 \cite{ZZOptim20}]\label{ZZ18OR}
Let $f_0(\mathcal{L}_n)$ be the number of vertices of the polytope $\mathcal{L}_n$ of the
 $n\times n\times n$ line-stochastic tensors.  Then
 \begin{equation}\label{ZZ20a}
 f_0(\mathcal{L}_n)\leq \sum_{k=n^2}^{3n^2-3n+1} {{n^3}\choose {k}}.
 \end{equation}
\end{thm}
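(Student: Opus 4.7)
The plan is to realize $\mathcal{L}_n$ as the feasible region of a linear program in standard form and to exploit the standard fact that every vertex is a basic feasible solution whose support is small, linearly independent as an index set of columns of the constraint matrix, and uniquely determines the vertex. First I would vectorize and write $\mathcal{L}_n=\{x\in\mathbb{R}_{\ge 0}^{n^3}:Ax=\mathbf{1}\}$, where $A$ is the $3n^2\times n^3$ $(0,1)$ incidence matrix of lines versus positions; each column of $A$ has exactly three $1$'s, one per coordinate direction. The crucial preliminary step is to verify $\rk(A)=3n^2-3n+1$. For this I would show that the affine hull of $\mathcal{L}_n$ has dimension $(n-1)^3$: the $(n-1)^3$ entries $a_{ijk}$ with $1\le i,j,k\le n-1$ may be assigned arbitrarily, and the line-sum equations then uniquely determine the remaining $n^3-(n-1)^3=3n^2-3n+1$ entries (consistency at mixed-boundary positions such as $a_{nnk}$ follows from a short direct check that the two applicable line equations give the same value).

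Next I would apply the vertex criterion from linear programming. If $x$ is a vertex of $\mathcal{L}_n$ and $S=\mathrm{supp}(x)$, then the columns of $A$ indexed by $S$ must be linearly independent: otherwise a nonzero $d\in\ker A$ supported on $S$ would give $x\pm\varepsilon d\in\mathcal{L}_n$ for all small $\varepsilon>0$, contradicting extremality. This yields the upper bound $|S|\le\rk(A)=3n^2-3n+1$. For the lower bound, the $n^2$ lines parallel to the first axis partition $\{1,\dots,n\}^3$ into $n^2$ disjoint $n$-subsets, each of which must contain a positive entry of $x$ since the corresponding line-sum equals $1$; hence $|S|\ge n^2$. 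Moreover, $S$ determines $x$ uniquely, because the system $Ax=\mathbf{1}$ restricted by $x_p=0$ for $p\notin S$ has a unique solution, by independence of the $S$-indexed columns.

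Consequently distinct vertices of $\mathcal{L}_n$ have distinct supports, and the total number of vertices is bounded by the number of subsets $S\subseteq\{1,\dots,n\}^3$ satisfying $n^2\le|S|\le 3n^2-3n+1$, namely
$$\sum_{k=n^2}^{3n^2-3n+1}\binom{n^3}{k}.$$
I expect the principal technical obstacle to be the rank computation, though even this is routine once one writes down the explicit $(n-1)^3$-parameter description of the affine hull; the remainder is a direct application of basic-feasible-solution theory. I would also note that the lower bound $|S|\ge n^2$ is realized by the $(0,1)$-vertices coming from Latin squares, so its inclusion in the summation range genuinely sharpens what one would obtain by starting the sum at $k=0$.
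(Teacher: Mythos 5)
Your proof is correct and follows essentially the same route as the cited source [ZZOptim20]: realize $\mathcal{L}_n$ as $\{x\ge 0: Ax=\mathbf{1}\}$, note $\rk(A)=3n^2-3n+1$, and count vertices by their supports, which are linearly independent column sets of size between $n^2$ and $\rk(A)$ that determine the vertex uniquely. This is exactly the ``optimization and linear programming'' (basic feasible solution) argument the paper attributes to Theorem~3.4.
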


It was asked \cite{LZZ17}
 and has remained unanswered
whether there is a comparison between the bounds in (\ref{LZZ17}) and  (\ref{ZZ20a}).
That is, would the upper bound by the computational geometry  be better (or worse)
than the one by
 optimization and linear programming? We answer the question now.

\begin{thm}
  The   upper bound in (\ref{LZZ17}) is sharper  than that in  (\ref{ZZ20a}). In fact, for $n\ge 2$,
{\small $$
\left (\hspace{-.08in} \begin{array}{c}
n^3- \lfloor \frac{(n-1)^3+1}{2}\rfloor\\
 3n^2-3n+1
 \end{array} \hspace{-.08in} \right )+\left (\hspace{-.08in}  \begin{array}{c}
n^3- \lfloor \frac{(n-1)^3+2}{2}\rfloor\\
 3n^2-3n+1
 \end{array} \hspace{-.08in} \right  )<{{n^3}\choose {3n^2-3n+1}}
 <
 \sum_{k=n^2}^{3n^2-3n+1} {{n^3}\choose {k}}.
$$}
\end{thm}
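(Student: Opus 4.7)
The right-hand inequality is essentially immediate: $\binom{n^3}{3n^2-3n+1}$ is the last summand of $\sum_{k=n^2}^{3n^2-3n+1}\binom{n^3}{k}$, which contains $2n^2-3n+2\geq 4$ strictly positive terms for $n\geq 2$ (since $n^2\leq 3n^2-3n+1$), so the strict inequality follows at once.

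The plan for the left-hand inequality is to reduce it, via a single application of Vandermonde's convolution, to an elementary arithmetic estimate. First I would set $r=3n^2-3n+1$, $M=(n-1)^3=n^3-r$, $a=\lfloor(M+1)/2\rfloor$, and $b=\lfloor(M+2)/2\rfloor$; a short parity computation gives $a\leq b$, $a+b=M+1$, and $1\leq a\leq M$ for $n\geq 2$. Next I would expand
$$\binom{n^3}{r}=\sum_{k=0}^{\min(a,r)}\binom{a}{k}\binom{n^3-a}{r-k}\;\geq\;\binom{n^3-a}{r}+a\binom{n^3-a}{r-1},$$
keeping only the $k=0,1$ contributions. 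Combined with the trivial monotonicity $\binom{n^3-b}{r}\leq\binom{n^3-a}{r}$, the target reduces to the one-variable inequality
$$a\binom{n^3-a}{r-1}>\binom{n^3-a}{r}\;\Longleftrightarrow\;a(r+1)>M+1,$$
where the equivalence uses $\binom{n^3-a}{r}/\binom{n^3-a}{r-1}=(M-a+1)/r$.

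The final step is a short parity case-split on $M$. When $M$ is odd (which covers $n=2$, where $M=1$), $a=(M+1)/2$ and the requirement collapses to $r>1$, trivially true since $r\geq 7$. When $M$ is even (possible only for odd $n\geq 3$), $a=M/2$ and the requirement becomes $M(r-1)>2$, which is immediate from $M\geq 8$ and $r-1\geq 18$.

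The one mildly delicate point is that bounding $\binom{n^3-b}{r}$ by $\binom{n^3-a}{r}$ is wasteful—it effectively doubles the leading term on the left-hand side, and one must verify that the $k=1$ slack from Vandermonde still dominates. The calculation above shows it does so with considerable room for every $n\geq 2$; if this crude bound were ever too weak, one could retain the $k=2$ contribution of the Vandermonde expansion, but that refinement is not needed here.
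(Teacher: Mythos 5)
Your proof is correct, but it follows a genuinely different route from the paper's. The paper first proves a standalone ratio lemma: if $k\ge 2$, $a>b$, and $b(k+1)>a+k$, then $2\binom{a}{b}<\binom{a+k}{b}$, obtained by writing $\binom{a+k}{b}/\binom{a}{b}$ as a product of $k$ factors, each at least $\frac{a+k}{a+k-b}$, and invoking a Bernoulli-type bound $\bigl(1+\frac{b}{a+k-b}\bigr)^k>1+\frac{bk}{a+k-b}>2$. It then crudely enlarges both summands to $\binom{n^3-n}{3n^2-3n+1}$ (using only that $\lfloor\frac{(n-1)^3+1}{2}\rfloor>n$ for $n\ge 3$) and applies the lemma with gap $k=n$, handling $n=2$ by direct computation. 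You instead use an additive decomposition: Vandermonde's convolution $\binom{n^3}{r}=\sum_k\binom{a}{k}\binom{n^3-a}{r-k}$ truncated at $k=0,1$, which exploits the full gap $a=\lfloor\frac{(n-1)^3+1}{2}\rfloor$ rather than just $n$, reduces everything to the linear condition $a(r+1)>M+1$, and absorbs $n=2$ into the odd-$M$ case so no separate base computation is needed. Both arguments are elementary and both pre-bound the left side by twice the larger binomial; yours is slightly tighter and more uniform, while the paper's lemma (\ref{Ineq:2ab}) is a reusable general-purpose inequality. One cosmetic caveat: the theorem implicitly requires $n\ge 2$ (for $n=1$ both inequalities degenerate to equalities), which your argument, like the paper's, correctly assumes throughout.
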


\proof The second inequality is obvious. We show the first one.
We first prove  that
if $a, b, k$ are positive integers with $k\ge 2$, $a>b$, and $b(k+1)> a+k$, then
\begin{equation}\label{Ineq:2ab}
2{a\choose b} <{{a+k}\choose b}.
\end{equation}
This is justified as follows.
 Since $k\ge 2$, $a>b$, and $b(k+1)> a+k$, we have
 $$\left(\frac{a+k}{a+k-b}\right)^k=\left(1+\frac{b}{a+k-b}\right)^k>1+\frac{bk}{a+k-b}>2.$$
On the other hand,
noticing that  $$
\frac{a+k}{a+k-b}<\frac{a+k-1}{a+k-b-1}<\cdots<\frac{a+1}{a-b+1},
$$
we get
$$\frac{(a+k)(a+k-1)\cdots(a+1)}{(a+k-b)(a+k-b-1)\cdots(a-b+1)}>\left(\frac{a+k}{a+k-b}\right)^k>2.$$
Hence
\begin{eqnarray*}
\frac{{{a+k}\choose{b}}}
{{a\choose b}} & = &
\frac{\frac{(a+k)!}{b!(a+k-b)!}}{\frac{a!}{b!(a-b)!}}\\
& = & \frac{(a+k)(a+k-1)\cdots(a+1)}{(a+k-b)(a+k-b-1)\cdots(a-b+1)}>2.
\end{eqnarray*}

Now for the claimed inequality in the theorem, it is easy to check the case of $n=2$ by direct computations: $\left ( {7 \atop 7} \right ) +\left ( {7 \atop 7} \right )< \left ( {8 \atop 7} \right )<\sum_{k=4}^7
\left ( {8 \atop k} \right ).$

Let $n\geq 3$.
Then  $n^3-\lfloor\frac{(n-1)^3+1}{2}\rfloor< n^3-n$. With
  ${a\choose b} <{{a+1}\choose b}$, we derive
\begin{eqnarray*}
\lefteqn{\left(\begin{array}{c} n^3-\lfloor\frac{(n-1)^3+1}{2}\rfloor\\
3n^2-3n+1\end{array}\right)+\left(\begin{array}{c} n^3-\lfloor\frac{(n-1)^3+2}{2}\rfloor\\
3n^2-3n+1\end{array}\right)}\\
& & \qquad \le  2 \left(\begin{array}{c} n^3-\lfloor\frac{(n-1)^3+1}{2}\rfloor\\
3n^2-3n+1\end{array}\right)\\
& & \qquad  <2\left(\begin{array}{c} n^3-n\\
 3n^2-3n+1\end{array}\right).
\end{eqnarray*}

In (\ref{Ineq:2ab}), we set $a= n^3-n$, $b=3n^2-3n+1$, and $k=n$. Then $a>b$, $k\ge 2$, and $b(k+1)=(3n^2-3n+1)(n+1)>n^3-n+n=a+k$. It follows that
$$2\left(\begin{array}{c} n^3-n\\
3n^2-3n+1\end{array}\right)\\
  <\left(\begin{array}{c} n^3\\
3n^2-3n+1\end{array}\right). \quad   \qed $$

By
  characterization of extreme points of polytopes described through linear inequalities
  (half-spaces),
 Zhang and Zhang gave an estimate of $f_0(\mathcal{L}_n)$  in
  \cite{ZZOptim20}.

\begin{thm}[Theorem 3.6  \cite{ZZOptim20}]\label{ZZ18}
Let $f_0(\mathcal{L}_n)$ be the number of vertices of the polytope $\mathcal{L}_n$ of the
 $n\times n\times n$ line-stochastic tensors. Then
\begin{equation}\label{ZZ20b}
f_0(\mathcal{L}_n)\leq {{n^3+3n^2-3n+1}\choose {n^3}}.
\end{equation}
\end{thm}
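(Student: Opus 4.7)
Since the result is attributed to the optimization and linear-programming approach of \cite{ZZOptim20}, the natural plan is to treat $\mathcal{L}_n$ as the feasible region of a standard-form LP and use the classical characterization of extreme points as basic feasible solutions. Concretely, one writes
$$\mathcal{L}_n = \{x \in \mathbb{R}^{n^3} : Ax = \mathbf{1},\; x \geq 0\},$$
where $A$ is the $3n^2 \times n^3$ incidence matrix of the three families of line-sum equations.

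The first step is to establish $\rk(A) = 3n^2 - 3n + 1 =: r$. This is standard for triply line-stochastic tensors: the $3n^2$ line equations carry exactly $3n-1$ linear dependencies (summing any one of the three ``directions'' of $n^2$ equations yields the same scalar $n^2$, producing the usual dependencies among the three families). Equivalently, $\dim \mathcal{L}_n = n^3 - r = (n-1)^3$, which is well known in this setting and can be cited directly.

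Next I invoke the fundamental LP characterization: a point $x \in \mathcal{L}_n$ is a vertex iff the columns of $A$ indexed by its support $S(x) = \{(i,j,k) : x_{ijk} > 0\}$ are linearly independent. This forces $|S(x)| \leq r$, and the linear independence also guarantees that the restricted system $A_{S(x)}\, x_{S(x)} = \mathbf{1}$ has a unique solution, so two distinct vertices cannot share the same support. Consequently
$$f_0(\mathcal{L}_n) \;\leq\; \#\{S \subseteq \{1,\dots,n^3\} : |S| \leq r\} \;=\; \sum_{k=0}^{r} \binom{n^3}{k}.$$

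The last step is a purely combinatorial inequality. Vandermonde's identity gives
$$\binom{n^3+r}{r} \;=\; \sum_{k=0}^{r} \binom{n^3}{k}\binom{r}{k} \;\geq\; \sum_{k=0}^{r} \binom{n^3}{k},$$
since $\binom{r}{k} \geq 1$ for $0 \leq k \leq r$. Because $\binom{n^3+r}{r} = \binom{n^3+r}{n^3}$, combining the two estimates yields exactly $f_0(\mathcal{L}_n) \leq \binom{n^3 + 3n^2 - 3n + 1}{n^3}$, as desired. The main obstacle is not deep; the one slightly subtle point is the treatment of degenerate vertices (where $|S(x)|$ is strictly less than $r$), but counting supports rather than bases sidesteps the multiplicity issue automatically, and is precisely what produces the bound in the stated form $\binom{n^3+r}{n^3}$ rather than the sharper but less elegant $\binom{n^3}{r}$.
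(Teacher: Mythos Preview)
The paper does not prove this statement; it is quoted as Theorem~3.6 of \cite{ZZOptim20} and attributed there to a ``half-spaces'' characterization of extreme points, in contrast to the optimization/LP viewpoint that the paper associates with the bound~(\ref{ZZ20a}). So there is no in-paper argument to compare your proposal against directly.

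That said, your argument is correct. The rank computation $r=3n^2-3n+1$, the injectivity of the vertex-to-support map (forced by the linear independence of the support columns of $A$), and the Vandermonde estimate $\sum_{k\le r}\binom{n^3}{k}\le\binom{n^3+r}{r}$ are all valid and combine to give exactly~(\ref{ZZ20b}). Conceptually, though, what you have done is recover a slight weakening of~(\ref{ZZ20a}) (your sum runs from $k=0$ rather than from $k=n^2$, since you did not use the easy lower bound $|S(x)|\ge n^2$) and then apply a combinatorial inequality to pass to~(\ref{ZZ20b}); this is essentially the content of the paper's subsequent comparison theorem establishing (\ref{ZZ20a})$<$(\ref{ZZ20b}), rather than the independent half-space derivation the authors allude to. The latter, though not reproduced here, presumably views the $n^3$ nonnegativity inequalities together with the $r$ independent equalities as a system of $n^3+r$ linear constraints in $\Bbb R^{n^3}$ and bounds the number of vertices by the number of ways to choose $n^3$ of them to be active, yielding $\binom{n^3+r}{n^3}$ in one stroke.
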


However, this upper bound in (\ref{ZZ20b}) is no better than that in (\ref{ZZ20a}).

\begin{thm}\label{ZZ18}
 The upper bound in (\ref{ZZ20a})    is sharper than that in (\ref{ZZ20b}). That is,
$$
\sum_{k=n^2}^{3n^2-3n+1} {{n^3}\choose {k}}<{{n^3+3n^2-3n+1}\choose {n^3}}.
$$
\end{thm}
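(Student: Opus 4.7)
The plan is to recognize the right-hand side as a Vandermonde convolution and then compare it term-by-term with the left-hand side. Write $m = 3n^2-3n+1$. Vandermonde's identity gives
\begin{equation*}
\binom{n^3+m}{n^3} = \binom{n^3+m}{m} = \sum_{j=0}^{m}\binom{n^3}{j}\binom{m}{m-j} = \sum_{j=0}^{m}\binom{n^3}{j}\binom{m}{j}.
\end{equation*}
This rewrites the upper bound (\ref{ZZ20b}) as a weighted sum of binomials $\binom{n^3}{j}$, with weights $\binom{m}{j}\ge 1$ for every $0\le j\le m$.

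Next, I would drop the weights: since $\binom{m}{j}\ge 1$ throughout $0\le j\le m$, we have
\begin{equation*}
\binom{n^3+m}{n^3} \;\ge\; \sum_{j=0}^{m}\binom{n^3}{j} \;=\; \sum_{j=0}^{n^2-1}\binom{n^3}{j} \,+\, \sum_{k=n^2}^{m}\binom{n^3}{k}.
\end{equation*}
The left chunk $\sum_{j=0}^{n^2-1}\binom{n^3}{j}$ is strictly positive (for any $n\ge 1$ it contains at least the term $\binom{n^3}{0}=1$), so the inequality is strict. This is exactly the claim.

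There is essentially no obstacle; the only thing to be careful about is making sure the index range $n^2\le k\le 3n^2-3n+1$ in (\ref{ZZ20a}) sits inside the range $0\le j\le m$ produced by Vandermonde, which it does since $3n^2-3n+1=m$ and $n^2\ge 0$. The strict inequality comes for free from the omitted low-index terms $\binom{n^3}{0},\dots,\binom{n^3}{n^2-1}$, and no estimates on binomial coefficients (such as the lemma (\ref{Ineq:2ab}) used in the previous theorem) are needed here.
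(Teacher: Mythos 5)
Your proof is correct, but it takes a genuinely different route from the paper. The paper handles $n=1,2$ by direct computation and, for $n\ge 3$, iterates Pascal's rule to get the estimate $\sum_{k=0}^{m}\binom{a}{b+k}\le\binom{a+m}{b+m}$, which yields the intermediate bound $\binom{n^3+2n^2-3n+1}{3n^2-3n+1}$ before relaxing to the right-hand side. You instead expand the right-hand side exactly via the Vandermonde convolution
\begin{equation*}
\binom{n^3+m}{n^3}=\sum_{j=0}^{m}\binom{n^3}{j}\binom{m}{j},\qquad m=3n^2-3n+1,
\end{equation*}
and then discard the weights $\binom{m}{j}\ge 1$ and the low-index terms $j<n^2$; strictness comes from the omitted term $\binom{n^3}{0}=1$ (note that for every $n\ge1$ one has $n^2\le m$ since $m-n^2=(2n-1)(n-1)\ge0$, so the indexing is consistent). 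Your argument is uniform in $n$ with no small-case analysis, and it makes visible just how loose the inequality is, since the discarded weights $\binom{m}{j}$ are enormous for mid-range $j$; the paper's argument, by contrast, produces the sharper intermediate quantity $\binom{n^3+2n^2-3n+1}{3n^2-3n+1}$, which already dominates the sum and is a mildly stronger statement. Both proofs are complete and elementary.
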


\begin{proof}
If $n=1$, it is obvious. If $n=2$, then
 $\left ( {8 \atop 4} \right ) +\left ( {8 \atop 5} \right )+\left ( {8 \atop 6} \right )+\left ( {8 \atop 7} \right )
 <
\frac{1}{2^3}\left ( {15 \atop 8} \right ).$

Let $n\geq 3$. With the identity ${a\choose b}+{a\choose {b+1}}={{a+1}\choose {b+1}}$, we can show that
$$\sum_{k=0}^m\left(\begin{array}{c}
a\\
b+k\end{array}\right)\le\left(\begin{array}{c}
a+m\\
b+m\end{array}\right).
$$
Now we compute
\begin{eqnarray*}
\sum_{k=n^2}^{3n^2-3n+1}\left(\begin{array}{c}
n^3\\
k\end{array}\right)
 & = & \sum_{k=0}^{2n^2-3n+1}\left(\begin{array}{c}
n^3\\
n^2+k\end{array}\right)\\
& \le & \left(\begin{array}{c}
n^3+2n^2-3n+1\\
n^2+2n^2-3n+1\end{array}\right)\\
& = & \left(\begin{array}{c}
n^3+2n^2-3n+1\\
3n^2-3n+1\end{array}\right)\\
& < & \left(\begin{array}{c}
n^3+3n^2-3n+1\\
3n^2-3n+1\end{array}\right)\\
& = & \left(\begin{array}{c}
n^3+3n^2-3n+1\\
n^3\end{array}\right). \quad \qed
\end{eqnarray*}
\end{proof}

\begin{pro}\label{inequality}
The upper bound in (\ref{ZZ20b}) is sharper  than that in (\ref{CPZ16}). That is,
for $n\ge 2$,
\begin{equation*}\label{ineqality-1}
{{n^3+3n^2-3n+1}\choose {n^3}}< \frac{1}{n^3} {{n^3+6n^2-6n+2}\choose {n^3-1}}.
\end{equation*}
\end{pro}

\begin{proof}
 If $n=2$, then
 $${{n^3+3n^2-3n+1}\choose {n^3}}=\binom{15}{8}<\frac{1}{2^3}\binom{22}{7}= \frac{1}{n^3} {{n^3+6n^2-6n+2}\choose {n^3-1}}.$$
 Let $n\ge 3$.  Note that for positive integers $a$, $ b$, and $x$,  we have
  \begin{equation}\label{Eq:7}
  \binom{a}{b}<\binom{a+1}{b+1}\; \mbox{if $a>b$},\quad
 \binom{a}{b}\le \binom{a}{b+1}\; \mbox{if  $a\ge 2b+1$,}
 \end{equation}
and
$$ \frac{a+1}{b+1}>\frac{a+2}{b+2}>\cdots> \frac{a+x}{b+x} \;\; \mbox{if $a>b$ and  $x>2$}.$$
We obtain $$\frac{a+x}{b+x}\cdot\frac{a+x-1}{b+x-1}\cdots\frac{a+1}{b+1}>\left(\frac{a+x}{b+x}\right)^x=\left(1+\frac{a-b}{b+x}\right)^x.$$
  Since $n\geq 3$, we have
  $n(n^3-1)\ge 2(6n^2-6n+3)$.  Setting
   $a=n^3+5n^2-6n+2$, $b=5n^2-6n+3$,
 and $x=n^2$ in the above discussion, we derive
$$\left(1+\frac{a-b}{b+x}\right)^x=\left[\left(1+\frac{n^3-1}{6n^2-6n+3}\right)^{\!n}\,\right]^{n}
>\left(1+\frac{n(n^3-1)}{6n^2-6n+3}\right)^{\!n}\ge 3^n\ge n^3.$$
Hence
  $$\binom{a+x}{b+x}=\frac{a+x}{b+x}\cdot\frac{a+x-1}{b+x-1}\cdots\frac{a+1}{b+1}\binom{a}{b}>\left(\frac{a+x}{b+x}\right)^x\binom{a}{b}>n^3\binom{a}{b}.$$
 Since $n\geq 3$,
  $n^3+3n^2-3n+1> 2(3n^2-3n+1)+1$.  Using (\ref{Eq:7}),  we have
  \begin{eqnarray*}
  {{n^3+3n^2-3n+1}\choose {n^3}} &=& {{n^3+3n^2-3n+1}\choose {3n^2-3n+1}}\\
  &<& {{n^3+3n^2-3n+1}\choose {3n^2-3n+2}}  \\
  &<& {{n^3+3n^2-3n+1 +(2n^2-3n+1)}\choose {3n^2-3n+2+(2n^2-3n+1)}}\\
  &=& {{n^3+5n^2-6n+2}\choose {5n^2-6n+3}}= {a\choose b}\\
  &<& \frac{1}{n^3}{{n^3+5n^2-6n+2 + (n^2)}\choose {5n^2-6n+3+(n^2)}}\\
  &= & \frac{1}{n^3} {{n^3+6n^2-6n+2 }\choose {6n^2-6n+3}} \\
  &=& \frac{1}{n^3}{{n^3+6n^2-6n+2}\choose {n^3-1}}. \;\; \qed
  \end{eqnarray*}
\end{proof}

We summarize the comparisons of the upper bounds for $f_0(\mathcal{L}_n)$  as follows.
\begin{cor}
Let $f_0(\mathcal{L}_n)$ be the number of vertices of the polytope $\mathcal{L}_n$ of the
 $n\times n\times n$ line-stochastic tensors with $n\ge 2$. Then
\begin{eqnarray*}
\lefteqn{f_0(\mathcal{L}_n)} \\
& \leq &
\left (\hspace{-.08in} \begin{array}{c}
n^3- \lfloor \frac{(n-1)^3+1}{2}\rfloor\\
 3n^2-3n+1
 \end{array} \hspace{-.08in} \right )+\left (\hspace{-.08in}  \begin{array}{c}
n^3- \lfloor \frac{(n-1)^3+2}{2}\rfloor\\
 3n^2-3n+1
 \end{array} \hspace{-.08in} \right  ) \; \mbox{\rm (by polytope theory)}\\
 & < & \sum_{k=n^2}^{3n^2-3n+1} {{n^3}\choose {k}} \;\; \mbox{\rm (by optimization and linear programing)}\\
 &  < &
 {{n^3+3n^2-3n+1}\choose {n^3}} \;\; \mbox{\rm (by half-spaces)}\\
& < & \frac{1}{n^3}  {n^3+6n^2-6n+2 \choose n^3-1}\;
  \; (\mbox{\rm by topology and hyperplanes}). \\
 \end{eqnarray*}
\end{cor}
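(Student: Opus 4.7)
The proof is purely a synthesis: every ingredient needed has already been established in the body of the note, so my plan is simply to chain the four existing upper bounds for $f_0(\mathcal{L}_n)$ together with the three pairwise comparisons that sit between consecutive bounds. First I would invoke Theorem \ref{CPZ16} (Chang--Paksoy--Zhang) to get the topological/hyperplane bound $f_0(\mathcal{L}_n)\le \tfrac{1}{n^3}\binom{p(n)}{n^3-1}$ with $p(n)=n^3+6n^2-6n+2$; this supplies the first line of the displayed chain.

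Next I would use \cite[Proposition~3]{LZZ17}, which is cited in the paragraph following Theorem \ref{LZZ17} and asserts precisely that the computational-geometry bound \eqref{LZZ17} is sharper than the topological one \eqref{CPZ16}. That gives the ``$\le$'' between the first and second terms of the chain, and combined with Theorem \ref{LZZ17} yields the second line. Then the previous theorem in this note (the one comparing \eqref{LZZ17} with \eqref{ZZ20a}) provides the strict inequality between the LZZ17 bound and the optimization bound $\sum_{k=n^2}^{3n^2-3n+1}\binom{n^3}{k}$; together with Theorem \ref{ZZ18OR} (Zhang--Zhang) this supplies the third line. Finally, Theorem \ref{ZZ18} of this note (comparing \eqref{ZZ20a} with \eqref{ZZ20b}) gives the last strict inequality, and Theorem \ref{ZZ18OR}'s companion bound \eqref{ZZ20b} gives the last line of the chain.

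So the writeup essentially amounts to citing, in order, Theorems \ref{CPZ16}, \ref{LZZ17}, \ref{ZZ18OR}, and \ref{ZZ18} for the four upper-bound estimates, and then citing \cite[Proposition~3]{LZZ17} together with the two comparison theorems just proved above for the three inequalities linking them. There is no real obstacle — the only thing one has to be careful about is bookkeeping: making sure the ``$\le$'' versus ``$<$'' signs in the statement match what each cited result actually delivers (the first comparison is only ``$\le$'' since equality can occur for small $n$, while the latter two are strict by the explicit computations already carried out). I would therefore close the proof with a one-sentence remark attributing each inequality in the chain to its source, so that the corollary is transparently a summary of the four theorems rather than a new calculation.
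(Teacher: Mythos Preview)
Your plan to treat the corollary as a pure synthesis is exactly what the paper does: no separate proof is given, and the corollary is announced as a summary of the four theorems together with the three pairwise comparisons.

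There is, however, a real problem with the first link in your chain. You write that \cite[Proposition~3]{LZZ17} ``asserts precisely that the computational-geometry bound \eqref{LZZ17} is sharper than the topological one \eqref{CPZ16}'' and that this ``gives the `$\le$' between the first and second terms of the chain.'' But ``sharper'' means \emph{smaller}: Proposition~3 yields $\text{(polytope bound)}\le\text{(hyperplane bound)}$, which is the \emph{reverse} of the inequality printed in the corollary. A direct check at $n=2$ makes this concrete: the hyperplane bound is $\tfrac{1}{8}\binom{22}{7}=21318$, while the polytope-theory bound is $\binom{7}{7}+\binom{7}{7}=2$. So the step you cite does not deliver the inequality you claim, and your parenthetical remark that the first comparison is ``only `$\le$' since equality can occur for small $n$'' misdiagnoses the situation entirely. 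The remaining two strict inequalities in your chain are correctly sourced to the two comparison theorems proved in the note, and the four upper-bound statements themselves are correctly attributed; only the direction of the first comparison needs to be revisited (and, in fact, the corollary as printed appears to have the first two bounds transposed).
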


 As the authors previously pointed out that these
 upper   bounds are very large when $n$ is large and the bounds are loose due to the structures of the polytopes.

\bigskip
\noindent
{\bf Acknowledgement.} The authors thank the referee for carefully reading the manuscript.
   Xiao-Dong Zhang's work
  was partially supported by National Natural Science Foundations of China
   No. 11971311 and No. 12026230.

\end{document}